\documentclass[12pt, letterpaper]{amsart}

\usepackage{amsmath,amssymb,latexsym, color
}
\usepackage[dvips,centering,includehead,width=13.1cm,height=22.7cm,
paperheight=23.8cm,paperwidth=17cm
]{geometry}
\usepackage{enumitem}
\usepackage{hyperref}
\usepackage{pict2e}
\usepackage{epic,graphicx}
\usepackage{rotating}
\usepackage{mathtools}
\usepackage{tikz-cd}
\usepackage{ccaption}
\usepackage[commentmarkup=footnote]{changes}
\usepackage{subfiles}
\usepackage{nicefrac}
\changecaptionwidth
\captionwidth{5.6in}

\renewcommand{\ref}{\hyperref}

\def\K{{\mathbb K}}
\def\Z{{\mathbb Z}}
\def\R{{\mathbb R}}
\def\Q{{\mathbb Q}}

\def\C{{\mathbb C}}

\def\PP{{\mathbb P}}

\newcommand{\bi}{\boldsymbol{i}}

\newcommand{\proofend}{\hfill$\Box$\bigskip}

\DeclareMathOperator{\pr}{pr}\DeclareMathOperator{\St}{St}
\DeclareMathOperator{\val}{Val}

\DeclareMathOperator{\Sing}{\mathbf{Sing}}

\newtheorem{theorem}{Theorem}[section]
\newtheorem{proposition}[theorem]{Proposition}

\newtheorem{lemma}[theorem]{Lemma}

\theoremstyle{definition}

\newtheorem{remark}[theorem]{Remark}

\newtheorem{definition}[theorem]{Definition}

\numberwithin{equation}{section}

\newcommand{\Id}{{ \operatorname{Id}}}

\usetikzlibrary{arrows.meta, decorations.markings}

\usepackage{environ}
\makeatletter
\newsavebox{\measure@tikzpicture}
\NewEnviron{scaletikzpicturetowidth}[1]{%
	\def\tikz@width{#1}%
	\begin{lrbox}{\measure@tikzpicture}%
		\BODY
	\end{lrbox}%
	\pgfmathparse{#1/\wd\measure@tikzpicture}%
	\BODY
}
\makeatother

\usepackage{todonotes}

\newcommand{\todos}{\makeatletter
	\providecommand\@dotsep{5}
	\makeatother
	\listoftodos\relax}

\DeclarePairedDelimiterX\setc[2]{\{}{\}}{\,#1 \;\delimsize\vert\; #2\,}

\begin{document}
\title{Anti-Zariski pairs}
\author{Peng Ren}
\address{School of Mathematical Sciences, Tel Aviv
	University, Tel Aviv 6997801,
	Israel}
\email{pren@fudan.edu.cn}

\author{Eugenii Shustin}
\address{School of Mathematical Sciences, Tel Aviv
	University, Tel Aviv 6997801,
	Israel}
\email{shustin@tauex.tau.ac.il}

\thanks
{\emph{2020 Mathematics Subject Classification}
	Primary 14H10
	Secondary 14H20; 14H30; 14N20}

\thanks{The authors were supported by the DFG grant no. DE 1422/9-1 and by the BSF grant no. 2022/157.
The first author was partly supported by a postdoctoral fellowship from the Shamoon College of Engineering. The second author enjoyed a support from the Bauer-Neuman Chair in Real and Complex Geometry.}


\begin{abstract}
In 1929, O. Zariski found a pair of complex plane algebraic curves of the same degree and with the same collection of singularities, but embedded into the plane in a topologically different way. Accordingly, such curves belong to different components of the equisingular family. This phenomenon has been intensively studied till now. In this note, we propose a different insight on this subject: Two curves $C',C''\subset\PP^2$ form an {\it anti-Zariski pair}, if $(\PP^2,C')$ and $(\PP^2,C'')$ are homeomorhic, but $C'$ and $C''$ belong to different components of the equisingular family. We exhibit examples of anti-Zariski pairs and discuss related issues.	
\end{abstract}

\keywords{Plane algebraic curve, Plane curve singularity, Line arrangement, Conic-line arrangement}

\maketitle

\tableofcontents

\makeatletter
\providecommand\@dotsep{5}
\makeatother
	
\section{Introduction}

Starting with the famous Zariski example of two complex plane sextic curves with six cusps, whose complements have different fundamental groups \cite{Za}, the study of this phenomenon has become an intensive research area including computation of various topological and algebraic-geometric invariants of plane algebraic curves, related singularity theory, topology of algebraic surfaces, and other issues.

The goal of this note is to attract the attention of experts to a somewhat opposite phenomenon, which we call {\it anti-Zariski pairs},
i.e., pairs of complex plane curves of the same degree which are topologically equivalent as embedded curves, but are not isotopic in a family of algebraic curves. Here is the precise definition.

\begin{definition}\label{def-azp}
Two reduced curves $C,C'\subset\PP^2$ of the same degree form an anti-Zariski pair if they belong to different components of the same equisingular family and the pairs $(\PP^2,C)$, $(\PP^2,C')$ are homeomorphic.
\end{definition}

\begin{remark}\label{rem-weak}
In fact, anti-Zariski pairs have been known for a while, for instance, pairs of branch curves of generic projections of certain algebraic surfaces \cite{Cat,KK} and pairs of maximizing sextic curves \cite{AD,ABCC,Deg12}.
\end{remark}

One can consider pairs of curves satisfying relaxed requirements:
We say that curves $C,C'\subset\PP^2$ form a weak anti-Zariski pair, if they belong to different components of the same equisingular family and the complements $\PP^2\setminus C$, $\PP^2\setminus C'$ are homeomorphic.

Note that there are many pairs of curves $C,C'\subset\PP^2$ of the same degree, belonging to different components of the equisingular families and having homotopy equivalent complements $\PP^2\setminus C$, $\PP^2\setminus C'$: By \cite[Proposition 6.5]{Deg14} the complement to an irreducible plane curve $D\subset\PP^2$ of degree $m$ such that $\pi_1(\PP^2\setminus D)\simeq\Z_m$ has the standard homotopy type depending on $m$ and on the total Milnor number $\mu(D)$ of all singular points. Examples of such pairs of sextic curves with $\mu=19$ can be found in \cite{Deg12,Deg14}, and some of them are anti-Zariski pairs (see Section \ref{sec-sextic} for details). Similar examples of pairs of curves with nodes and cusps and of curves with ordinary multiple points whose complements have abelian fundamental groups can be found in \cite{GLS2000} and \cite{GLS01}, respectively, but we do not know whether they form anti-Zariski pairs (even in a weak sense). On the other hand, in \cite{ACM} and \cite{Shir}, one can find infinite series of tuples of reducible equisingular curves of the same degree whose complements have fundamental groups isomorphic to $\Z^3$, but are not homeomorphic to each other. Examples of pairs of line arrangements which are not equivalent up to a homeomorphism of the plane, but have homotopy equivalent complements are found in \cite[Theorem 1.4]{Gue}.

All anti-Zariski pairs presented in this note are pairs of complex conjugate curves. A natural question arises:

\smallskip

\smallskip{\bf Question:} {\it Is there an anti-Zariski pair of irreducible curves $C,C'\subset\PP^2$ such that $C'$ and the complex conjugate of $C$ belong to different components of the equisingular family?}

\medskip

Possible candidates for such anti-Zariski pairs are the aforementioned examples in \cite{GLS2000,GLS01} though they seem hard to handle due to relatively high degree. Much more reasonable candidates are pairs of equisingular maximizing sextic curves that are not complex conjugate and whose complements have isomorphic fundamental groups, see \cite{AD}, \cite{ABCC},  \cite[Chapter 8]{Deg12}, and \cite{Deg14}. Other reasonable candidates are pairs of line arrangements of $13$ lines constructed in \cite[Section 3]{GB} and irreducible curves of any even degree $\ge6$ having three higher cusps \cite[Proposition 1.1 and Question 4.1]{ACM}.

\smallskip

The paper is organized as follows. In Sections \ref{sec-bc} and \ref{sec-sextic}, we exhibit examples of anti-Zariski pairs found in the existing literature. In Sections \ref{sec-la} and \ref{sec-cla}, we present new examples: anti-Zariski pairs formed by line arrangements of degree $10$ and conic-line arrangements of degree $9$ and $10$, and we show that $10$ is the minimal degree of line arrangements in an anti-Zariski pair. In Section \ref{sec-red}, we show a simple way to produce infinitely many anti-Zariski pairs starting with any given anti-Zariski pair. Section \ref{sec-na} is devoted to a version of the problem stated over an algebraically closed, non-Archimedean extension of the complex field $\C$. We also pose some open questions.

\smallskip{\bf Acknowledgements.} We are grateful to Meirav Amram, Michael Dettweiler, and Junhan Tang for stimulating discussions on the subject. Special thanks are due to Enrique Artal Bartolo and Alex Degtyarev for their very valuable comments on the preliminary version of this paper. The first author also thanks Meirav Amram and Michael Dettweiler for the guidance and support in his postdoctoral research; he also would like to thank the University of Bayreuth for hospitality and excellent working conditions during his visit there.

\section{Preliminaries}

\subsection{Equisingularity relation}
We work over the complex field $\C$.

Let $\PP^2$ be the complex projective plane with the standard complex conjugation $c:\PP^2\to\PP^2$. For any geometric object $T\subset\PP^2$, we denote its complex conjugate by $T^c$.

\begin{definition}\label{def-top}
Two plane curve singular points $(C,p)$ and $(C',p')$ are called topologically equivalent if there are small open balls $p\in B\subset\C^2$ and $p'\in B'\subset \C^2$ and a homeomorphism $h:B\to B'$ such that
$$h(p)=p'\quad\text{and}\quad h(B\cap C)=B'\cap C'.$$
\end{definition}

\begin{definition}\label{def-es}
(1) Two reduced, irreducible curves $C,C'\subset\PP^2$ of the same degree are called equisingular (to each other) if there is a bijection $\quad$ $\xi:\Sing(C)\to\Sing(C')$ such that the singular points $(C,p)$ and $(C',\xi(p))$ are topologically equivalent for all $p\in\Sing(C)$.

(2) Two reduced, reducible curves $C,C'\subset\PP^2$ of the same degree are called equisingular if there are two bijections $\xi:\Sing(C)\to\Sing(C')$ and $\eta:\{C_1,...,C_r\}\to\{C'_1,...C'_r\}$, $\eta(C_i)=C'_i$, $1\le i\le r$, where $C_1,...,C_r$ are all (distinct) components of $C$, and $C'_1,...,C'_r$ are all (distinct) components of $C'$, and the following holds:
\begin{itemize}\item $\deg C_i=\deg C'_i$, $1\le i\le r$;
\item $p\in\Sing(C)$ is a common point of $C_{i_1},...,C_{i_k}$ if and only if $\xi(p)\in\Sing(C')$ is a common point of $C'_{i_1},...,C'_{i_k}$, and moreover, for any non-empty subset $K\subset\{1,...,k\}$, the germs
    $$\left(\bigcup_{j\in K}C_{i_j},p\right)\quad\text{and}\quad\left(\bigcup_{j\in K}C'_{i_j},\xi(p)\right)$$
    are topologically equivalent.
\end{itemize}

(3) An equivalence class of the equisingularity relation is called an {\it equisingular family} and is denoted $ES(C)$, $C$ being any representative.
\end{definition}

It is well-known that equisingular families $ES(C)$ are locally closed subschemes of the linear systems $|{\mathcal O}_{\PP^2}(d)|$, $d=\deg C$ (see, for example \cite[Section 2.2.2.1]{GLS}).

A curve $C\subset\PP^2$ is called rigid if the component of $ES(C)$ containing $C$ is the orbit of the $PGL(3,\C)$-action.

\subsection{Example: Equisingular families of line arrangements}
Traditionally, line arrangements are assumed to be ordered sequences of distinct lines in the plane, and they are parameterized by the realization spaces $R(I)$, where $I$ is the incidence relation (see details in \cite{NY,Ye}).

The order of lines does not matter for the equisingular relation, and we replace a line arrangement ${\mathcal A}$ with $|{\mathcal A}|$, the union of lines of ${\mathcal A}$. To relax notations, we write $R({\mathcal A})$ and $ES({\mathcal A})$ instead of $R(I({\mathcal A}))$ and $ES(|{\mathcal A}|)$, respectively. These spaces are related by the formula
\begin{equation}ES({\mathcal A})=\pr(R({\mathcal A})),\label{e-rs-esf}\end{equation}
where $\pr:((\PP^2)^*)^n\to|{\mathcal O}_{\PP^2}(n)|$ is given by
$$(l_1,...,l_n)\in((\PP^2)^*)^n\ \overset{\pr}{\mapsto}\ l_1\cup...\cup l_n\in|{\mathcal O}_{\PP^2}(n)|.$$

We also mention a simple criterion for two line arrangements to be equisingular:

\begin{lemma}\label{la-es}
Two (unordered) line arrangements are equisingular if and only if they are homeomorphic.
\end{lemma}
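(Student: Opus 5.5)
The plan is to reduce both directions to the combinatorics of incidences. By Definition \ref{def-es}(2), two line arrangements $\mathcal A=\{L_1,\dots,L_n\}$ and $\mathcal A'=\{L'_1,\dots,L'_n\}$ are equisingular exactly when there is a bijection of lines $L_i\mapsto L'_i$ and a bijection $\xi$ of singular points such that $p\in L_{i_1}\cap\dots\cap L_{i_k}$ if and only if $\xi(p)\in L'_{i_1}\cap\dots\cap L'_{i_k}$. The germs $(\bigcup_{j\in K}L_{i_j},p)$ are ordinary $|K|$-fold points, so their topological type depends only on $|K|$. Hence the equisingularity condition is automatic once the incidences match. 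So equisingularity amounts to the two arrangements having isomorphic incidence lattices, up to relabeling the lines.

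\emph{Homeomorphic $\Rightarrow$ equisingular.} Let $h:(\PP^2,|\mathcal A|)\to(\PP^2,|\mathcal A'|)$ be a homeomorphism. The singular points of $|\mathcal A|$ are intrinsic to the topology: $p$ is a point where the curve is not locally a topological $2$-manifold. A local link of a $k$-fold point is $k$ Hopf-linked circles, so the number of local branches at $p$ is determined by the number of components of the link, or simply by the number of ends of $|\mathcal A|\setminus\{p\}$ near $p$. Thus $h$ maps $\Sing$ to $\Sing$ bijectively and preserves multiplicities.

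The lines must be handled next. Removing the singular points, each line becomes a sphere minus finitely many points, which is connected. The connected components of $|\mathcal A|\setminus \Sing$ are therefore exactly the punctured lines. The exception is the degenerate case where every line passes through one point (a pencil); there each punctured line is still its own component, so the argument is unaffected. The homeomorphism $h$ permutes these components, and taking closures gives a bijection $\eta$ of the lines. Incidence is preserved because $p\in L_i$ iff $p$ lies in the closure of the corresponding component. By the reduction in the first paragraph, $\mathcal A$ and $\mathcal A'$ are equisingular. One subtle point: Definition \ref{def-top} requires a homeomorphism of balls, and the global $h$ does not restrict to balls onto balls. This is sidestepped entirely because we only use branch counts and the fact that local type depends only on multiplicity.

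\emph{Equisingular $\Rightarrow$ homeomorphic.} This is the main obstacle, since realization spaces $R(\mathcal A)$ may be disconnected, so no isotopy is available. I would instead construct the homeomorphism by hand. The intended route is to blow up all points of multiplicity $\ge3$ in both arrangements. This produces normal crossing divisors in surfaces $X$ and $X'$, each being $\PP^2$ blown up at $s$ points, with isomorphic dual graphs and equal self-intersection numbers, namely $1-(\text{number of blown-up points on the line})$ and $-1$. Plumbing theory gives an orientation-preserving homeomorphism of regular neighborhoods $N(D)\to N(D')$, realized as a plumbing of disk bundles over spheres.

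The step I expect to be hardest is extending this homeomorphism over the complements $X\setminus N(D)$ and $X'\setminus N(D')$. These are not obviously homeomorphic. For arrangements with disconnected realization spaces, even $\pi_1$ of the complements might a priori differ. If this extension cannot be established, I would fall back on the standard approach and cite the known literature, for instance the references \cite{NY,Ye}.

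A safer simplification is to first check what the authors intend by ``homeomorphic'' here. If they mean homeomorphism of the curves $|\mathcal A|$, $|\mathcal A'|$ as topological spaces rather than of pairs $(\PP^2,\cdot)$, then the converse is easy. In that case I would construct the homeomorphism directly: map each line $L_i$ to $L'_i$ by a homeomorphism of $2$-spheres sending its marked singular points to the corresponding ones. This is possible since any bijection of finite subsets of $S^2$ extends to a homeomorphism. These maps agree at the shared intersection points and so glue to a homeomorphism $|\mathcal A|\to|\mathcal A'|$. I would adopt this reading, which makes the lemma a clean combinatorial statement consistent with the first direction.
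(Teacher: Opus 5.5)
The paper states this lemma without proof, calling it a ``simple criterion'', so there is no argument of the authors to compare against. Under the intrinsic reading you finally adopt, where ``homeomorphic'' means a homeomorphism $|\mathcal A|\to|\mathcal A'|$ of the unions of lines, your argument is a complete proof.
\begin{itemize}
\item \emph{Equisingularity is incidence data.} Ordinary $k$-fold points are topologically equivalent for fixed $k$, so equisingularity amounts to matching incidences.
\item \emph{Homeomorphic $\Rightarrow$ equisingular.} Singular points and their multiplicities are detected by the local topology of the curve. The lines are recovered as closures of the components of $|\mathcal A|\setminus\Sing$, so a homeomorphism preserves incidences.
\item \emph{Equisingular $\Rightarrow$ homeomorphic.} Choose homeomorphisms of $2$-spheres $L_i\to L'_i$ realizing $\xi$ on the marked points. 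They agree on each $L_i\cap L_j$, which is a single point, so they glue to a continuous bijection of compact Hausdorff spaces, hence a homeomorphism.
\end{itemize}
One adjustment is needed. In this reading you start only from $h:|\mathcal A|\to|\mathcal A'|$, so detect multiplicity intrinsically rather than through the Hopf link, which is ambient data. For example, use $H_2(|\mathcal A|,|\mathcal A|\setminus\{p\})\cong\Z^k$, or the number of components of $U\setminus\{p\}$ for small conic neighbourhoods $U$.

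What you should not do is leave the ambient version open as a ``hardest step'' to be settled by plumbing or by citing \cite{NY,Ye}. For pairs $(\PP^2,|\mathcal A|)$ the implication ``equisingular $\Rightarrow$ homeomorphic'' is false. Rybnikov's two arrangements of $13$ lines, obtained by gluing two MacLane arrangements along three common lines, have isomorphic intersection lattices, so by your first paragraph they are equisingular. Yet the fundamental groups of their complements are not isomorphic; compare the arrangements of \cite[Theorem 1.4]{Gue} mentioned in the paper's introduction. So the plumbing homeomorphism $N(D)\to N(D')$ cannot in general be extended over $X\setminus N(D)$. Moreover, \cite{NY,Ye} concern realization spaces and contain no such statement.

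The intrinsic reading is therefore forced, not merely the safer choice. This fits the paper: for its line-arrangement examples, the homeomorphism of pairs required in the definition of an anti-Zariski pair comes from complex conjugation $c$, not from this lemma.
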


\section{Examples of anti-Zariski pairs}

\subsection{Anti-Zariski pairs of branch curves}\label{sec-bc}
The following statement is an immediate consequence of \cite[Proposition 4.3]{KK}.

\begin{proposition}\label{prop-surf1}
Let $X,X^c\subset\PP^r$ be smooth projective complex conjugate surfaces of general type that are not deformation equivalent, $\pr:X\to\PP^2$ and $\pr^c:X^c\to\PP^2$ two generic complex conjugate projections. Then the branch curves $B$ and $B^c$ of these projections form an anti-Zariski pair.
\end{proposition}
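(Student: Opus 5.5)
Two things have to be checked: that $(\PP^2,B)$ and $(\PP^2,B^c)$ are homeomorphic, and that $B$ and $B^c$ are equisingular curves lying in different components of $ES(B)$.

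\emph{Homeomorphism and equisingularity.} Take $\pr^c=c\circ\pr\circ c$, where $c$ is complex conjugation on $\PP^r$ and on $\PP^2$. Then $\pr^c$ is the projection from the conjugate center, and it is generic exactly when $\pr$ is, since conjugation preserves all the Zariski-open genericity conditions. Hence $B^c=c(B)$. The map $c:\PP^2\to\PP^2$ is a homeomorphism carrying $B$ onto $B^c$, so the pairs are homeomorphic. Both branch curves of generic projections have only nodes and ordinary cusps, and $c$ maps each node or cusp of $B$ to a node or cusp of $B^c$ (the local homeomorphism is $c$ itself). The degrees agree, and if $B$ were reducible $c$ would match the components degree-preservingly. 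So $B$ and $B^c$ are equisingular in the sense of Definition \ref{def-es}.

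\emph{Different components.} Suppose $B$ and $B^c$ lay in one irreducible component $V$ of $ES(B)$. This is where \cite[Proposition 4.3]{KK} comes in. It says, in effect, that a connected family of branch curves of generic projections (nodes and cusps with the fixed numbers) lifts to a family of the covering surfaces. More precisely, let $\pi:X\to\PP^2$ be a generic projection with branch curve $B$, and deform $B$ equisingularly inside $V$. Then the cover deforms along, because generic projections are determined, via Chisini's conjecture as proved in the setting of \cite{KK}, by their branch curves for surfaces of general type and large enough degree. The surfaces $X_t$ in the family therefore form a smooth family of surfaces of general type.

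There is one subtlety. A nearby curve of $V$ need not a priori be a branch curve at all, so we must check that the locus of branch curves in $V$ is open and closed. Openness should follow from the unramified nature of the generic cover over $\PP^2\setminus B$: the monodromy representation $\pi_1(\PP^2\setminus B_t)\to S_N$ is locally constant in an equisingular family. Closedness should follow from the same monodromy argument along a path, since the total Milnor number is constant. In fact this step is precisely the content of \cite[Proposition 4.3]{KK}, and I would cite it directly. Carrying the covers along a path in $V$ from $B$ to $B^c$ then gives a deformation from $X$ to a surface $X'$ with generic projection branched along $B^c$.

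By the uniqueness part of Chisini's theorem (again from \cite{KK}), $X'\cong X^c$. Hence $X$ and $X^c$ are deformation equivalent, a contradiction. So $B$ and $B^c$ lie in different components of $ES(B)$ and form an anti-Zariski pair.

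The main obstacle is this last step: making sure the hypotheses of \cite[Proposition 4.3]{KK} (general type, genericity, degree bounds for Chisini uniqueness) apply, and that the path in $V$ really lifts to a family of surfaces. Since the paper calls the statement an immediate consequence of that result, I expect the citation to cover it.
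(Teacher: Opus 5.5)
Your proposal is correct and follows the paper, which simply cites \cite[Proposition 4.3]{KK}. As you do, complex conjugation gives the homeomorphism $(\PP^2,B)\to(\PP^2,B^c)$ together with equisingularity, and an equisingular path from $B$ to $B^c$ would transport the generic covering (through local topological triviality and the monodromy $\pi_1(\PP^2\setminus B_t)\to S_N$); Chisini uniqueness then makes $X$ and $X^c$ deformation equivalent, contradicting the hypothesis. Two small corrections: the lifting step rests on this monodromy transport, not on Chisini's conjecture, and you should run the path argument in a connected component of $ES(B)$ rather than an irreducible component $V$, which it handles equally well.
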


Examples of such pairs of surfaces come from \cite[Theorem 1.3]{Cat} and \cite[Theorem 1.1 and Proposition 2.1]{KK}.

\subsection{Anti-Zariski pairs of irreducible sextic curves}\label{sec-sextic}
Equisingular families of reduced curves of degree $\le5$ are irreducible, see \cite{Deg90} and \cite[Theorem 7.49]{Deg12}; hence, anti-Zariski pairs of reduced curves should be of degree at least $6$. Here, we present examples of anti-Zariski pairs of the minimal possible degree $6$.

U.Persson \cite{Per} showed that
\begin{itemize}\item The maximal possible total Milnor number of a plane irreducible sextic curve with simple ADE singularities equals $19$ (such sextics are called {\it maximizing}).
\item Maximizing sextics are rigid (this fact can also be derived from \cite[Theorem 1]{Sh}).
Furthermore, each class is represented by a curve defined over a number field $\Q(\lambda)$ with some algebraic number $\lambda$.
\end{itemize}
The moduli spaces of maximizing sextics were classified by I. Shimada \cite{Shi}. Explicit equations of the representatives with only double points were obtained by S. Orevkov \cite{Ore}. Among them there are pairs of complex conjugate representatives, which we list in the following statement (see \cite[Tables 2 and 3]{AD}):

\begin{proposition}\label{prop-sextic}
There are anti-Zariski pairs of complex conjugate maximizing sextic curves possessing the following collections of singularities:
\begin{footnotesize}
\begin{align} &A_{18}\oplus A_1 &-\ \text{1 pair};\nonumber\\
&A_{16}\oplus A_2\oplus A_1 &-\ \text{1 pair};\nonumber\\
&A_{15}\oplus A_4 &-\ \text{1 pair};\nonumber\\
&A_{14}\oplus A_4\oplus A_1 &-\ \text{3 pairs};\nonumber\\
&A_{13}\oplus A_6 &-\ \text{2 pairs};\nonumber\\
&A_{12}\oplus A_7 &-\ \text{1 pair};\nonumber\\
&A_{12}\oplus A_6\oplus A_1 &-\ \text{1 pair};\nonumber\\
&A_{12}\oplus A_4\oplus A_2\oplus A_1 &-\ \text{1 pair};\nonumber\\
&A_{10}\oplus A_8\oplus A_1 &-\ \text{1 pair};\nonumber\\
&A_{10}\oplus A_6\oplus A_3 &-\ \text{1 pair};\nonumber\\
&A_{10}\oplus A_6\oplus A_2\oplus A_1 &-\ \text{1 pair};\nonumber\\
&A_{10}\oplus 2A_4\oplus A_1 &-\ \text{1 pair};\nonumber\\
&A_9\oplus A_6\oplus A_4 &-\ \text{1 pair};\nonumber\\
&A_8\oplus A_7\oplus A_4 &-\ \text{1 pair};\nonumber\\
&A_8\oplus A_6\oplus A_4\oplus A_1 &-\ \text{1 pair};\nonumber\\
&A_8\oplus A_5\oplus A_4\oplus A_2 &-\ \text{1 pair};\nonumber\\
&A_7\oplus 2A_6 &-\ \text{1 pair}.\nonumber
\end{align}
\end{footnotesize}
\end{proposition}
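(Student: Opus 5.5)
The plan is to combine three inputs: rigidity of maximizing sextics, the known classification of their equisingular strata into $PGL(3,\C)$-orbits, and the fact that complex conjugation is a homeomorphism of $\PP^2$.

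\emph{Topological equivalence.} For any curve $C\subset\PP^2$, the standard conjugation $c$ is a homeomorphism of $\PP^2$ with $c(C)=C^c$. Hence $(\PP^2,C)$ and $(\PP^2,C^c)$ are homeomorphic. Moreover, $c$ maps singular germs of $C$ to germs of $C^c$ that are topologically equivalent, and it preserves degrees of components. So $C^c\in ES(C)$, and the homeomorphism condition of Definition \ref{def-azp} is automatic for conjugate pairs.

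\emph{Different components.} By Persson's result, every maximizing sextic is rigid. Thus the component of $ES(C)$ containing $C$ is the orbit $PGL(3,\C)\cdot C$. This orbit is irreducible of dimension $8$, since the stabilizer of a curve with such singularities is finite. Because $ES(C)$ is locally closed and all its components here have this same dimension, each component is the closure in $ES(C)$ of a single orbit. Consequently, $C$ and $C^c$ lie in different components if and only if they are not projectively equivalent.

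\emph{Checking each collection.} For each listed set of singularities I would invoke Shimada's classification \cite{Shi}, as tabulated in \cite[Tables 2 and 3]{AD}. For every entry, this classification describes the orbits of $ES$ and identifies pairs of orbits exchanged by conjugation: the transcendental lattice (or, equivalently, the period/discriminant-form data) of the associated K3 double plane distinguishes $C$ from $C^c$, even though the abstract lattice data coincide. The number of such pairs in each row is read off from these tables.

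If a self-contained check is wanted, I would use Orevkov's explicit equations \cite{Ore} over $\Q(\lambda)$. The conjugate curve corresponds to replacing $\lambda$ by $\bar\lambda$. One then shows that no projective transformation maps one curve to the other, for instance by computing a projective invariant (such as a cross-ratio or normalized coefficient after fixing the singular points and tangent data) whose value is non-real.

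The main obstacle is this last step: proving that $C$ and $C^c$ are not projectively equivalent. All the topology is trivial, and the entire content lies in the arithmetic/lattice-theoretic distinction, which I would import from \cite{Shi,AD} rather than reprove.
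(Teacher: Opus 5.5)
Your proposal is correct and is essentially the paper's argument: conjugation supplies the homeomorphism, Persson's rigidity reduces ``different components'' to ``not projectively equivalent'', and the actual separation of $C$ from $C^c$ is imported from Shimada's classification as tabulated in \cite[Tables 2 and 3]{AD}. One correction to your lattice remark: the transcendental lattices and discriminant forms of the double planes over $C$ and $C^c$ coincide up to isometry, so they cannot be what ``distinguishes'' the curves; what separates the two components is the orientation of the positive definite rank-$2$ lattice $T$, which conjugation reverses, so $C$ and $C^c$ are projectively inequivalent exactly when $T$ admits no orientation-reversing isometry.
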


\begin{remark}\label{rem-sec}
(1) Similar examples of anti-Zariski pairs of maximizing sextics with a triple point can be extracted from \cite[Tables 1 and 4]{AD}. In addition, examples of anti-Zariski pairs of reducible, two-component sextics can be extracted from \cite[Theorems 1 and 2]{ABCC} and \cite[Table 8.11]{Deg12}.

(2) According to \cite{AD,Deg12,Deg14}, the fundamental groups of the complements to real maximizing sextics with singularity collections as in Proposition \ref{prop-sextic} are isomorphic to $\Z_6$. The fundamental groups of the complements to other maximizing sextics can be non-abelian, in particular, this holds for the anti-Zariski pair of sextics with singularities $A_8\oplus A_5\oplus A_4\oplus A_2$ (see more details in \cite{AD}). It would be interesting to compute the fundamental groups of the complements to sextic curves in other anti-Zariski pairs listed in Proposition \ref{prop-sextic}.
\end{remark}

\subsection{Anti-Zariski pairs of line arrangements}\label{sec-la}
Here, we present an anti-Zariski pair of line arrangements of degree $10$ and show that $10$ is the minimal degree of line arrangements in anti-Zariski pairs.

We start with the self-dual MacLane arrangement (aka M\"obius-Kantor arrangement) ${\mathcal L}\subset\PP^2$ of type $(8_3)$ (see \cite{CoX,Dol} and \cite[Example 4.3]{NY}): It consists of $8$ points and $8$ lines such that each point is common to $3$ lines and each line contains $3$ points (the four extra double points are not counted). It is well-known that ${\mathcal L}$ is rigid. We choose the following representative ${\mathcal L}_0\in ES({\mathcal L})$ given in the projective coordinates $x,y,z$ as follows:
\begin{itemize}\item real points
\begin{align}&p_1=(0,1,0),\ p_2=(-1,1,1),\ p_3=(-1,-1,1),\nonumber\\ &p_4=(1,1,1),\ p_5=(1,-1,1),
\nonumber\end{align}
non-real points
$$p_6=(\bi\sqrt{3},1,1),\ p_7=(\bi\sqrt{3},-1,1),\ p_8=(\bi,-\bi,\sqrt{3});$$
\item real lines
\begin{align}&l_1=\{x+z=0\},\ l_2=\{x-z=0\},\ l_3=\{y+z=0\},\nonumber\\ &l_4=\{y-z=0\},\ l_5=\{x+y=0\},\nonumber\end{align}
non-real lines
\begin{align}&l_6=\{x-\bi\sqrt{3}z=0\},\quad l_7=\{\omega^2x-\omega y+z=0\},\nonumber\\ &l_8=\{(\omega^c)^2x-\omega^c y-z=0\},\quad \omega=\frac{1+\bi\sqrt{3}}{2};\nonumber\end{align}
\item the incidence relations
\begin{align}&p_1,p_2,p_3\in l_1,\ p_1,p_4,p_5\in l_2,\ p_3,p_5,p_7\in l_3,\ p_2,p_4,p_6\in l_4,\nonumber\\
&p_2,p_5,p_8\in l_5,\ p_1,p_6,p_7\in l_6,\ p_4,p_7,p_8\in l_7,\ p_3,p_6,p_8\in l_8.\nonumber\end{align}
\end{itemize}

\begin{proposition}\label{prop-la}
The pair $|C_{10}|,|C^c_{10}|$ is an anti-Zariski pair, where $C_{10}$ is the union of ${\mathcal L}_0$ and two additional lines $l_9$ and $l_{10}$ such that $l_9$ passes through $p_6$ and a point $p\in l_1\setminus\{p_1,p_2,p_3\}$, while $l_{10}$ passes through $p_7$ and $p$.
\end{proposition}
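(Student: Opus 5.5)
Three things need to be shown: $|C_{10}|$ and $|C^c_{10}|$ are equisingular, i.e.\ homeomorphic pairs $(\PP^2,\cdot)$; and they lie in different components of $ES(C_{10})$.

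\textbf{Homeomorphism.} Complex conjugation $c:\PP^2\to\PP^2$ is a homeomorphism carrying $|C_{10}|$ onto $|C^c_{10}|$, so the pairs $(\PP^2,|C_{10}|)$ and $(\PP^2,|C^c_{10}|)$ are homeomorphic. By Lemma \ref{la-es} the two arrangements are then equisingular. We should also check that $C_{10}$ has the expected combinatorics for a generic choice of $p\in l_1$: $p$ is a triple point ($l_1,l_9,l_{10}$), and $l_9,l_{10}$ meet the remaining lines only in double points apart from $p_6,p_7$. This excludes finitely many positions of $p$ on $l_1$.

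\textbf{Distinct components.} The realization space $R(C_{10})$ fibres over $R(\mathcal L)$, with fibre given by the choice of $p\in l_1$ minus finitely many points. Since $\mathcal L$ is rigid, $R(\mathcal L)$ consists of $PGL(3,\C)$-orbits. Classically it has exactly two orbits, the MacLane configuration and its complex conjugate, and these are not projectively equivalent as labelled configurations. The addition of $l_9,l_{10}$ through $p_6,p_7$ and a point of $l_1$ should be set up to break the symmetry of the $(8_3)$ configuration. The aim is that every automorphism of the incidence structure of $C_{10}$ (which must preserve the subarrangement $\mathcal L$, the triple point $p$, the line $l_1$ and the pair $\{p_6,p_7\}$) lies in the subgroup of $\mathrm{Aut}(8_3)$ preserving each of the two orbits. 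Then, for the unordered arrangements, $ES(C_{10})=\pr(R(C_{10}))$ splits into two disjoint pieces, namely $PGL(3)$-orbits of the families over the two MacLane realizations. Each piece is irreducible, being the image of a connected $PGL(3)\times(\text{punctured } \PP^1)$, and $C_{10}$ and $C^c_{10}$ lie in different pieces. Concretely, one shows that $\mathrm{Aut}(8_3)\cong GL(2,3)$ contains elements exchanging the two realizations (the ``antiunitary'' ones, inducing conjugation), and that the stabilizer of the data $(l_1,\{p_6,p_7\})$ contains none of them. Equivalently, one can use an invariant: a cross-ratio on $l_1$ (or a similar projective invariant of the configuration) that is built canonically from the combinatorics of $C_{10}$ and takes non-real conjugate values on the two sides.

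\textbf{Main obstacle.} The hard step is the combinatorial automorphism check. We must identify the stabilizer in $\mathrm{Aut}(8_3)$ of the line $l_1$ together with the unordered pair of points $\{p_6,p_7\}$ not on $l_1$, and show it fixes rather than swaps the two conjugate realizations. I would first compute directly with the given coordinates. Any projective automorphism of $\mathcal L_0$ respecting $l_1$ and $\{p_6,p_7\}$ is then determined. I would check that no incidence-preserving relabelling maps $\mathcal L_0$ to $\mathcal L_0^c$ while preserving these data, for instance via the cross-ratio of $p_1,p_2,p_3$ and $l_1\cap l_6$-type points, which is $\omega$-valued and changes to $\omega^c$ under conjugation. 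The remaining point is whether the canonically defined cross-ratio genuinely separates the two sides or is accidentally symmetric; if it is symmetric, a different invariant built from the same data would be needed.
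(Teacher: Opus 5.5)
Your framework is sound. Complex conjugation gives the homeomorphism of pairs. $R(C_{10})$ fibres over the two $PGL(3,\C)$-orbits of labelled MacLane realizations, with fibres Zariski open in $l_1$. The components of $ES(C_{10})=\pr(R(C_{10}))$ correspond to orbits of $\mathrm{Aut}(I(C_{10}))$ on the two components of $R(C_{10})$.

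\textbf{The gap.} The one step that carries the proposition is that no automorphism of the incidence structure of $C_{10}$ exchanges the two realization classes. You only announce it (``one shows that\ldots''), and your last paragraph concedes you do not know whether your test works. The cross-ratio you propose is also degenerate as stated, since $l_1\cap l_6=p_1$ is already among $p_1,p_2,p_3$. More basically, any such invariant is well defined only once the four points are canonically labelled, which is the same combinatorial question again. So the proposal does not yet prove the statement, and everything else in it is bookkeeping.

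\textbf{How to close it.} The missing step is short: $\mathrm{Aut}(I(C_{10}))$ is trivial.
\begin{enumerate}
\item An automorphism fixes $l_1$ (the only line with four points of multiplicity $\ge3$), the set $\{l_9,l_{10}\}$ (the only lines with two such points), and $l_6$ (the only line through both quadruple points $p_6,p_7$).
\item Hence it fixes $p_1=l_1\cap l_6$ and $l_2$, then $l_5$ (the only line other than $l_1,l_2$ avoiding both $p_6$ and $p_7$).
\item Then it fixes $p_2=l_1\cap l_5$, $p_5=l_2\cap l_5$, $p_3$, $p_4$, and successively every point and line.
\end{enumerate}
In your $GL(2,3)$ model this is immediate: the stabiliser of two intersecting lines $\{g=1\}$, $\{h=1\}$ fixes the basis $g,h$ of the dual space, so it is trivial.

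Add the fact that a projective map sending labelled $\mathcal L_0$ to labelled $\mathcal L_0^c$ fixes the real points $p_2,\ldots,p_5$, which are in general position. It is therefore $\Id$, contradicting $p_6\ne p_6^c$. Together these show that $\pr$ sends the two components of $R(C_{10})$ to disjoint closed subsets of $ES(C_{10})$, one containing $|C_{10}|$ and the other $|C^c_{10}|$.

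\textbf{Comparison with the paper.} This is the paper's argument in different packaging. There, a hypothetical deformation plus rigidity yields $\varphi\in PGL(3,\C)$ with $\varphi(|\mathcal L_0|)=|\mathcal L_0^c|$ respecting the line labels. The same label chase then forces $\varphi=\Id$.

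Once triviality is known, your invariant idea also works with the right fourth point $q_1=l_1\cap l_7$, the unique double point of $C_{10}$ on $l_1$. The cross-ratio of $p_1,p_2,p_3,q_1$ is then canonically defined and is a primitive sixth root of unity. It is constant along equisingular deformations by rigidity, and it is replaced by its complex conjugate for $C^c_{10}$.
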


{\bf Proof.}
Suppose on the contrary that $|C_{10}|$ and $|C^c_{10}|$ are joined by an equisingular deformation.
Let us label the lines $l_1,...,l_{10}$ by sequences of multiplicities of points lying on them (while the double points are ignored)
$$l_1:(3,3,3,3),\ l_2:(3,3,3),\ l_3:(3,3,4),\ l_4:(3,3,4),\ l_5:(3,3,3),$$
$$l_6:(3,4,4),\ l_7:(3,3,4),\ l_8:(3,3,4),\ l_9:(3,4),\ l_{10}:(3,4).$$
Observe that the labels $(3,4)$ of $l_9$ and $l_{10}$ differ from the labels of $l_1,...,l_8$; hence, the above equisingular deformation descends to an equisingular deformation of $|{\mathcal L}_0|$ to $|{\mathcal L}_0^c|$.
The rigidity of $|{\mathcal L}_0|$ implies that the latter equisingular deformation is induced by a deformation inside $PGL(3,\C)$ of $\Id$ to $\varphi$ taking $|{\mathcal L}_0|$ to $|{\mathcal L}_0^c|$. Furthermore, the lines $l_1$ and $l_6$ have unique labels $(3,3,3,3)$ and $(3,4,4)$, respectively, and hence
$$\varphi(l_1)=l_1,\quad\varphi(l_6)=l_6^c,\quad \varphi(p_1)=p_1.$$
Since $l_2$ is the only remaining line through $p_1$, we derive that $\varphi(l_2)=l_2$. Among the other lines $l_3,l_4,l_5,l_7,l_8$, the line $l_5$ has its unique label $(3,3,3)$, and then $\varphi(l_5)=l_5$, which subsequently yields that $\varphi(p_2)=p_2$ and $\varphi(p_5)=p_5$. Next, we conclude
\begin{align}&\varphi(l_1)=l_1,\ \varphi(p_1)=p_1,\ \varphi(p_2)=p_2\ &\Longrightarrow\quad&\varphi(p_3)\in\{p_3,p^c\},\nonumber\\
&\varphi(l_2)=l_2,\ \varphi(p_1)=p_1,\ \varphi(p_5)=p_5\ &\Longrightarrow\quad&\varphi(p_4)=p_4.\nonumber\end{align}
At last, we have $\varphi(p_3)\ne p^c$, since otherwise, the line $l_3$ will go to the line through $p_5$ and $p^c$, which is not a part of ${\mathcal L}_0^c$.

Thus, the four points $p_2,p_3,p_4,p_5$ in general position are fixed by $\varphi$, and hence $\varphi=\Id$, which contradicts the initial assumption.
\proofend

\begin{remark}
One can construct similar examples of anti-Zariski pairs starting with any rigid line arrangement necessarily containing non-real points and lines.
\end{remark}

Now, we show that the example of an anti-Zariski pair of line arrangements in Proposition \ref{prop-la} has the minimal possible degree, i.e., there are no anti-Zariski pairs formed by line arrangements of degree $\le9$.

\begin{proposition}\label{prop-9}
Equisingular families of line arrangements of degree $\le9$ are irreducible.
\end{proposition}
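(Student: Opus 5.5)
By \eqref{e-rs-esf}, $ES({\mathcal A})=\pr(R({\mathcal A}))$. Moreover, $ES({\mathcal A})$ is the image under $\pr$ of the union of the realization spaces $R(I')$ over all incidence relations $I'$ obtained from $I({\mathcal A})$ by relabeling the lines. These spaces differ only by a permutation of the factors of $((\PP^2)^*)^n$, so they all have the same image. It therefore suffices to show that for every incidence relation $I$ of $n\le9$ lines, the realization space $R(I)$ is irreducible, or at least that its image in $|{\mathcal O}_{\PP^2}(n)|$ is. Equivalently, we may work with the moduli space $M(I)=R(I)/PGL(3,\C)$. Since $PGL(3,\C)$ is connected, irreducibility of $M(I)$ (after fixing a projective frame) gives irreducibility of $R(I)$.

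I would rely on the known classification results for realization spaces of small arrangements, in particular Nazir--Yoshinaga \cite{NY} and Ye \cite{Ye}. These show that moduli spaces of line arrangements with at most $9$ lines are irreducible. For $n\le 8$ this holds with the single exception of the MacLane type $(8_3)$, whose moduli space consists of two conjugate points, and for $n=9$ the exceptions are again built on the MacLane configuration. The proof then splits into two cases.

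\emph{Generic case.} Here there is a line $l$ containing at most two multiple points (points of multiplicity $\ge3$). By induction on $n$, $M(I\setminus l)$ is irreducible. Adding $l$ means choosing a line through at most two prescribed points, avoiding finitely many forbidden incidences. This is an open subset of a linear space (a point, a pencil, or $(\PP^2)^*$) over the base, so the result is a Zariski-open subset of a vector bundle (or projective bundle) over an irreducible base. It is therefore irreducible. The same argument applies to the ``nice'' arrangements of Nazir--Yoshinaga, reduced via the deletion of lines with at most two multiple points.

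\emph{Exceptional case.} This covers arrangements where every line carries at least three multiple points; for $n\le9$ these are few, e.g.\ MacLane with or without one extra line, the Hesse-type $(9_3)$ configurations, and Pappus-type ones. For these one checks the realization space directly.

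The main obstacle is the MacLane-based cases. $R({\mathcal L})$ for the $(8_3)$ arrangement has two components, interchanged by complex conjugation. However, the conjugate realizations differ by a relabeling of lines: the MacLane incidence structure has an automorphism (indeed the Möbius--Kantor configuration's automorphism group contains ``anti-holomorphic'' relabelings) mapping one component to the other. Hence $\pr$ of both components coincide, and $ES({\mathcal L})$ is a single $PGL(3,\C)$-orbit, which is irreducible. For $9$ lines containing the MacLane configuration, the added line passes through at most two of the points of ${\mathcal L}$ in a symmetric way. I would verify, as in the proof of Proposition~\ref{prop-la} but in reverse, that there is a combinatorial automorphism of the $9$-line incidence structure realized by a projective map sending ${\mathcal L}_0\cup l_9$ to its conjugate. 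This contrasts with the $10$-line example, where the two added lines through $p_6,p_7$ break the symmetry. Doing this check requires enumerating the possible positions of a ninth line relative to ${\mathcal L}_0$ (through two triple points, through one triple point and a double point, etc.) and exhibiting, in each case, an element of the stabilizer of the labelled configuration that realizes the conjugation. This is the step I expect to be the most laborious.
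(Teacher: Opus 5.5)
\textbf{The gap: two reducible $9$-line cases are missing.} You misstate the cited classification for $n=9$, and the argument breaks there. By \cite[Theorem 3.9]{Ye}, a $9$-line arrangement falls into one of four cases:
\begin{itemize}
\item its realization space is irreducible;
\item it is a Falk--Sturmfels arrangement;
\item it is an $\mathcal A^{\pm\bi}$ arrangement;
\item it contains a MacLane arrangement.
\end{itemize}
The Falk--Sturmfels and $\mathcal A^{\pm\bi}$ arrangements are not built on $(8_3)$. For instance, $\mathcal A^{\pm\bi}$ is realized over $\Q(\bi)$, which has no primitive cube root of unity, while $(8_3)$ needs one. Yet each has a realization space with \emph{two} components: $\mathcal{FS}^+,\mathcal{FS}^-$, and $\mathcal A^{\bi}$, $\mathcal A^{-\bi}=(\mathcal A^{\bi})^c$, respectively. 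So you cannot quote irreducibility of $R(I)$ here, and ``checking the realization space directly'' only confirms that it is reducible.

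What is missing is exactly the step you reserve for MacLane: show that the two components have the same image under $\pr$, i.e.\ that the unordered unions are projectively equivalent. For $\mathcal A^{\pm\bi}$, the map $(x,y,z)\mapsto(x,y,\bi z)$ takes $|\mathcal A^{\bi}|$ onto $|\mathcal A^{-\bi}|$. For Falk--Sturmfels, a projective transformation takes $|\mathcal{FS}^+|$ to $|\mathcal{FS}^-|$ \cite[Example 5.2]{NY}. The paper handles both cases this way; without them your proof does not cover $n=9$.

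\textbf{Smaller points.}
\begin{itemize}
\item Your ``generic case'' induction assumes $M(I\setminus l)$ is irreducible. This fails when $I\setminus l$ is the MacLane incidence. For example, $\mathcal L_0$ plus a line through no intersection point lands in your generic case, since the new line carries no multiple point. So every MacLane-containing arrangement must be excluded from that case and treated separately.
\item The ninth line need not meet $\mathcal L_0$ in at most two points. The four double points of $\mathcal L_0$ are collinear, and the line through them gives the dual Hesse arrangement. This case is settled by rigidity: its $ES$ is a single $PGL(3,\C)$-orbit.
\item In the positive-dimensional MacLane-plus-line strata, a projective map sending $|\mathcal L_0|\cup l_9$ to its conjugate exists only for specially chosen $l_9$, namely lines fixed by an antiprojective symmetry of $|\mathcal L_0|$. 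For generic $l_9$ this is a condition of positive real codimension, so you would have to choose $l_9$ with care.
\end{itemize}
The paper packages the same symmetry more cleanly. It shows that $\St(|\mathcal L_0|)$ acts transitively on the triple points, on the double points, and on the pairs (double point $q$, triple point off both lines through $q$). Hence the coverings $\mathcal F_3,\mathcal F_2\to ES(\mathcal L_0)$ are irreducible, and irreducibility follows from the fibration with irreducible fibres.
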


{\bf Proof.}
We explore results of \cite{NY,Ye} on irreducibility of realization spaces of line arrangements.
By formula (\ref{e-rs-esf}), for any line arrangement ${\mathcal A}$, the irreducibility of $R({\mathcal A})$ implies the irreducibility of $ES({\mathcal A})$.

By \cite[Propositions 3.3, 4.5, and 4.6, and Theorem 3.15]{NY} the realization spaces of all line arrangements of degree $\le8$ are irreducible, except for the MacLane arrangement ${\mathcal L}_0$. However, it is known that $ES({\mathcal L}_0)$ is the orbit of the $PGL(3,\C)$-action, and hence irreducible.

Furthermore, by \cite[Theorem 3.9]{Ye}, any arrangement of $9$ lines satisfies the following:
\begin{itemize}\item either its realization space is irreducible, and hence its equisingular family is irreducible;
\item or it is a Falk-Sturmfels arrangement (see \cite[Example 5.2]{NY} and \cite[Example 2.3]{Ye}); according to \cite[Example 5.2]{NY}, the realization space of Falk-Sturmfels arrangements consists of two components represented by arrangements ${\mathcal{FS}}^+$ and ${\mathcal{FS}}^-$, and there exists a projective transformation of the plane taking $|{\mathcal{FS}}^+|$ to $|{\mathcal{FS}}^-|$; hence, the equisingular family of Falk-Sturmfels arrangements is irreducible;
\item or it is an ${\mathcal A}^{\pm\bi}$ arrangement (see \cite[Example 5.3]{NY} and \cite[Example 2.4]{Ye}), where $|{\mathcal A}^{\bi}|$ is given by (cf. \cite[Example 2.4]{Ye})
 \begin{align}xy(x-z)(y-z)&(x-\bi z)(y-\bi z)(x-y)\nonumber\\
 &\times((\bi-1)x+\bi y+z)((1-\bi)x+y-z)=0,\nonumber\end{align}
 and ${\mathcal A}^{-\bi}=({\mathcal A}^{\bi})^c$; here, ${\mathcal A}^{\bi}$ and ${\mathcal A}^{-\bi}$ represent the only two components of the realization space; however, it is easy to see that the projective transformation
 $(x,y,z)\mapsto(x,y,\bi z)$ takes ${\mathcal A}^{\bi}$ to ${\mathcal A}^{-\bi}$, and hence the equisingular family is irreducible;
\item or it contains a MacLane arrangement.
\end{itemize}
It remains to consider the latter case, namely, arrangements ${\mathcal A}_9=({\mathcal L}_0,l_9)$ with a certain line $l_9\not\subset|{\mathcal L}_0|$.
Suppose that $l_9=\{y+\bi\sqrt{3}z=0\}$. Then it passes through all four double points
$$q_0=(1,0,0),\ q_1=(-1,-\bi\sqrt{3},1),\ q_2=(1,-\bi\sqrt{3},1),\ q_3=(\bi\sqrt{3},-\bi\sqrt{3},1)$$
of ${\mathcal L}_0$, and ${\mathcal A}_9$ turns out to be the dual Hesse arrangement ${\mathcal H}^\vee$ (i.e., the arrangement dual to the configuration of inflection points of a smooth plane cubic curve), which is rigid and $ES({\mathcal H}^\vee)$ is an irreducible orbit of the $PGL(3,\C)$-action.

Suppose that $l_9$ passes through a double point $q$ and a triple point $p$ of ${\mathcal L}_0$. For a given double point $q$, there are two suitable triple points $p$. Thus, to prove the irreducibility of $ES({\mathcal A}_9)$, we have to show that the stabiliser $\St(|{\mathcal L}_0|)\subset PGL(3,\C)$ acts transitively on the set of eight appropriate pairs $(p,q)$. Consider the transformation $\varphi\in PGL(3,\C)$ such that
$$\varphi(p_2)=p_4,\quad\varphi(p_4)=p_6,\quad\varphi(p_3)=p_5,\quad\varphi(p_5)=p_7.$$
Then $\varphi(l_3)=l_3$ and $\varphi(l_4)=l_4$, which also implies that $\varphi(q_0)=q_0$ since $\{q_0\}=l_3\cap l_4$. Since cross-ratios of four-tuples of points on lines are preserved, and since
$$\begin{cases}cr(q_0,p_2,p_4,p_6)=\frac{1+\bi\sqrt{3}}{2}=cr(q_0,p_4,p_6,p_2),&\\
cr(q_0,p_3,p_5,p_7)=\frac{1+\bi\sqrt{3}}{2}=cr(q_0,p_5,p_7,p_3)&,\end{cases}$$
we conclude that $\varphi(p_7)=p_3$ and $\varphi(p_6)=p_2$. It follows that
$$\varphi(l_5)=l_7,\ \varphi(l_7)=l_8,\ \varphi(l_8)=l_5,\varphi(l_1)=l_2,\ \varphi(l_2)=l_6,\ \varphi(l_6)=l_1;$$
hence, $\varphi\in\St(|{\mathcal L}_0|)$, and it induces the permutation
$$(q_0,q_1,q_2,q_3)\mapsto(q_0,q_2,q_3,q_1).$$
Selecting another double point $q_i$, $1\le i\le 3$, and exploring the two lines of ${\mathcal L}_0$ through it as $l_3,l_4$ above, we show that $\St(|{\mathcal L}_0|)$ induces all even permutations of the four-tuple of the double points, and acts transitively on it. Now, let us choose the double point $q_0$. The two suitable triple points for it are $p_1,p_8$. We will exhibit a transformation $\psi\in \St(|{\mathcal L}_0|)$ that fixes $q_0$ and interchanges $p_1$ and $p_8$. Namely, define $\psi\in PGL(3,\C)$ by
$$\psi(p_2)=p_7,\quad\psi(p_7)=p_2,\quad\psi(p_5)=p_6,\quad\psi(p_6)=p_5.$$
The above cross-ratio argument ensures that $\psi(p_3)=p_4$ and $\psi(p_4)=p_3$. It follows that
$$l_3\overset{\psi}{\leftrightarrow}l_4,\quad l_5\overset{\psi}{\leftrightarrow}l_6,\quad
l_7\overset{\psi}{\leftrightarrow}l_1,\quad l_8\overset{\psi}{\leftrightarrow}l_2$$
Thus, $\psi\in\St(|{\mathcal L}_0|)$ and $\psi(q_0)=q_0$, $\psi(p_1)=p_8$, $\psi(p_8)=p_1$ as required.

Suppose that $l_9$ passes through a triple or a double point of ${\mathcal L}_0$ and no other intersection point of lines of ${\mathcal L}_0$, then $ES({\mathcal A}_9)$ can be regarded as a fibration over
$${\mathcal F}_3=\{(p,{\mathcal L})\in\PP^2\times ES({\mathcal L}_0)\ :\ p\ \text{a triple point of}\ {\mathcal L}\},$$
or
$${\mathcal F}_2=\{(q,{\mathcal L})\in\PP^2\times ES({\mathcal L}_0)\ :\ q\ \text{a double point of}\ {\mathcal L}\},$$ respectively,
whose fibers are Zariski open subsets of pencils of lines. Since $\St(|{\mathcal L}_0|)$ acts transitively on the set of triple points and on the set of double points of ${\mathcal L}_0$ (as we explained in the preceding paragraph), the coverings
$${\mathcal F}_3\to ES({\mathcal L}_0), \quad {\mathcal F}_2\to ES({\mathcal L}_0)$$ are irreducible; hence, $ES({\mathcal A}_9)$ is irreducible.

At last, suppose that $l_9$ avoids all intersection points of lines of ${\mathcal L}_0$. Then $ES({\mathcal A}_9)$ can be viewed as a fibration over $ES({\mathcal L}_0)$, whose fibers are Zariski open subsets of $(\PP^2)^*$. Thus, $ES({\mathcal A}_9)$ is irreducible.
\proofend

\subsection{Anti-Zariski pairs of conic-line arrangements}\label{sec-cla}
Here, we construct anti-Zariski pairs of conic-line arrangements of degree $9$ and $10$.

\begin{proposition}\label{prop-cla}
The pair $|D_{10}|,|D^c_{10}|$ is an anti-Zariski pair, where $D_{10}$ is the union of ${\mathcal L}_0$ and a generic conic $C_2$ tangent to the lines $l_6,l_7,l_8$ and passing through the point $p_2$.
\end{proposition}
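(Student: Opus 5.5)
The plan mirrors the proof of Proposition \ref{prop-la}. There are two things to establish. First, $(\PP^2,|D_{10}|)$ and $(\PP^2,|D^c_{10}|)$ are homeomorphic, which is immediate since complex conjugation $c$ maps one pair onto the other. Second, $|D_{10}|$ and $|D^c_{10}|$ lie in different components of $ES(D_{10})$.

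For the second part, suppose on the contrary that there is an equisingular deformation from $|D_{10}|$ to $|D^c_{10}|$. Equisingularity in the sense of Definition \ref{def-es}(2) preserves degrees of components, so the conic goes to the conic and the lines of $\mathcal L_0$ go to lines. Deleting the conic along the deformation therefore yields an equisingular deformation of $|\mathcal L_0|$ to $|\mathcal L_0^c|$. By rigidity of the MacLane arrangement, this deformation is induced by a path in $PGL(3,\C)$ from $\Id$ to some $\varphi$ with $\varphi(|\mathcal L_0|)=|\mathcal L_0^c|$. Moreover, $\varphi$ must respect the incidences of the lines with the conic. The conic is tangent to $l_6,l_7,l_8$ and meets the other lines transversally. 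Genericity should ensure it avoids all intersection points of $\mathcal L_0$ except $p_2$, and is not tangent to $l_1,\dots,l_5$. Hence $\varphi$ maps $\{l_6,l_7,l_8\}$ to $\{l_6^c,l_7^c,l_8^c\}$ and fixes $p_2$, which is real so $p_2^c=p_2$.

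Next I pin down $\varphi$ combinatorially. The lines through $p_2$ are $l_1,l_4,l_5$. Among the lines $l_6,l_7,l_8$, only $l_6$ contains $p_1$; I would check this, and if it holds, a labelling argument like the one in Proposition \ref{prop-la} should force $\varphi$ to fix enough of the structure. I would compose with conjugation and study $\psi=c\circ\varphi$. This is an antiholomorphic map that preserves $|\mathcal L_0|$, fixes $p_2$, and preserves the set $\{l_6,l_7,l_8\}$. Since the stabiliser of $|\mathcal L_0|$ in $PGL(3,\C)$ is finite and was made explicit in the proof of Proposition \ref{prop-9} (the transformations $\varphi,\psi$ there generate symmetries acting on the triple points), I would enumerate the projective automorphisms $\varphi$ with $\varphi(|\mathcal L_0|)=|\mathcal L_0^c|$, $\varphi(p_2)=p_2$ and $\varphi\{l_6,l_7,l_8\}=\{l_6^c,l_7^c,l_8^c\}$. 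There are only a few, since such a map is determined by the images of four points in general position.

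The main obstacle is the last step. We know $\varphi$ lies in a finite set, but $\varphi$ need not be $\Id$, and the deformation path may end at a nontrivial such $\varphi$. So the conic itself must be used. The conics tangent to $l_6,l_7,l_8$ and passing through $p_2$ form a one-dimensional family; this matches the expected dimension, as $5$ conditions minus $4$ gives $1$. By duality, this family is a pencil-like curve $\Gamma$: tangency to three lines is linear in the dual conic, and passage through $p_2$ is a quadratic condition on it. Take $\Gamma$ to be irreducible. The deformation then gives a path in the component, which after normalizing by $PGL$ amounts to a path in $\Gamma$ from $C_2$ to $\varphi^{-1}(C_2^c)$. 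This path must preserve genericity, i.e.\ it must avoid the finitely many special conics that create extra tangencies or pass through nodes. That alone does not obstruct anything, since removing finitely many points from an irreducible curve leaves it connected.

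So the argument has to be: since $ES(|\mathcal L_0|)$ is a single $PGL$-orbit with finite stabiliser, the component of $ES(D_{10})$ through $D_{10}$ fibres over that orbit with fibre $\Gamma^\circ$ (generic members of $\Gamma$), twisted by the monodromy of the stabiliser. The conjugate curve lies in the same component exactly when some element of the coset $\varphi\,\St$ carries $\Gamma$ to $\Gamma^c$ on the nose as a family. I would check directly, by computing the dual-conic equations of $\Gamma$ and of the images $\varphi(\Gamma^c)$ for each admissible $\varphi$, that $\varphi(\Gamma^c)\neq\Gamma$. If that holds, the conjugate family lies over a different incidence class and the deformation cannot exist. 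If instead some $\varphi$ does carry $\Gamma^c$ to $\Gamma$, I would fall back on the finite combinatorial restriction: show that no admissible $\varphi$ exists at all. For this I would use that $\{l_6,l_7,l_8\}$ consists exactly of the non-real lines, and that $l_6$ passes through $p_1$, to derive a contradiction as in Proposition \ref{prop-la}.
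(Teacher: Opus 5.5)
Your setup matches the paper's. The deformation descends to $|\mathcal L_0|$, rigidity gives $\varphi\in PGL(3,\C)$ with $\varphi(|\mathcal L_0|)=|\mathcal L_0^c|$, and the conic forces $\varphi\{l_6,l_7,l_8\}=\{l_6^c,l_7^c,l_8^c\}$ and $\varphi(p_2)=p_2$.

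\textbf{The gap.} The decisive step is never carried out, and your diagnosis of what remains is wrong. You expect that nontrivial admissible $\varphi$ may exist, and propose to exclude them by checking that $\varphi$ does not match $\Gamma^c$ with $\Gamma$. That check cannot succeed. An admissible $\varphi$ sends the three tangent lines to $l_6^c,l_7^c,l_8^c$ and fixes $p_2$, so it automatically carries $\Gamma$ onto $\Gamma^c$.

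In fact, with $\Gamma$ irreducible (as you assume), a single admissible $\varphi$ would join $|D_{10}|$ to $|D_{10}^c|$. Follow a path from $\Id$ to $\varphi$ in $PGL(3,\C)$, then move inside the generic members of $\Gamma^c$ from $\varphi(C_2)$ to $C_2^c$. So the proposition is equivalent to the set of admissible $\varphi$ being empty. That is exactly the part you leave as an unexecuted ``fallback''.

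Your suggested route for that fallback also does not work. The labelling trick from the $C_{10}$ example does not transfer. There, $l_9,l_{10}$ gave $l_1$ and $l_6$ unique labels. Here $l_6,l_7,l_8$ look alike: each contains exactly one of $p_1,\dots,p_5$, namely $p_1,p_4,p_3$ respectively. So knowing $p_1\in l_6$ gives nothing by itself.

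\textbf{The missing argument (the paper's).}
\begin{itemize}
\item Since $\varphi$ maps the non-real lines of $\mathcal L_0$ to those of $\mathcal L_0^c$, it permutes the real lines $l_1,\dots,l_5$.
\item The lines $l_1,l_4,l_5$ meet at $p_2$, and $l_2,l_3,l_5$ meet at $p_5$. Hence $l_1,l_2,l_3,l_4$ is the only four-tuple among them in general position. So $\varphi$ permutes four real lines in general position, and therefore $\varphi$ is real.
\item The lines $l_1,l_2$ are the only real lines carrying three real triple points, so $\varphi(l_1\cup l_2)=l_1\cup l_2$ and $\varphi(p_1)=p_1$.
\item Since $\varphi(p_2)=p_2\in l_1$ and $p_2\notin l_2$, we get $\varphi(l_1)=l_1$, then $\varphi(p_3)=p_3$ and $\varphi(l_2)=l_2$.
\item $\varphi$ cannot swap $p_4$ and $p_5$. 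Otherwise $l_3=\overline{p_3p_5}$ would go to $\overline{p_3p_4}=\{x=y\}\not\subset|\mathcal L_0^c|$.
\end{itemize}
Thus $\varphi$ fixes $p_2,p_3,p_4,p_5$, which are in general position, so $\varphi=\Id$. This contradicts $\varphi(|\mathcal L_0|)=|\mathcal L_0^c|\neq|\mathcal L_0|$. As in the $C_{10}$ proof, pinning $\varphi$ down to $\Id$ is already the contradiction, and no analysis of the conic family $\Gamma$ is needed.
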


{\bf Proof.}
Suppose on the contrary that $|D_{10}|$ and $|D^c_{10}|$ are joined by an equisingular deformation.
Clearly, this equisingular deformation descends to an equisingular deformation of $|{\mathcal L}_0|$ to $|{\mathcal L}_0^c|$, and hence to a deformation inside $PGL(3,\C)$ of $\Id$ to $\varphi$ taking $|{\mathcal L}_0|$ to $|{\mathcal L}^c_0|$. Since the lines $l_6,l_7,l_8$ are the only lines in $D_{10}$ tangent to the conic $C_2$, $\varphi$ takes the union of real lines $l_1\cup...\cup l_5$ to itself. The latter union contains a unique four-tuple of lines in general position, and hence $\varphi$ is represented by a real $3\times3$ matrix. It implies that $\varphi(p_2)=p_2$ as $p_2$ is the only four-fold real point in $D_{10}$, and $\varphi(l_1\cup l_2)=l_1\cup l_2$, since $l_1,l_2$ are the only real lines in ${\mathcal L}_0$ passing through three real points of multiplicity $\ge3$.
It follows that $\varphi(p_1)=p_1$, $\varphi(l_1)=l_1$, and $\varphi(p_3)=p_3$. Furthermore, $\varphi(l_2)=l_2$, and $\varphi$ cannot interchange the points $p_4,p_5$, since otherwise the line $l_3$ would go to the line through $p_3$ and $p_4$ which is not a part of ${\mathcal L}^c_0$. Thus, $\varphi(p_4)=p_4$, $\varphi(p_5)=p_5$, and we conclude that $\varphi=\Id$ since it fixes four points in general position $p_2,p_3,p_4,p_5$.
\proofend

For the next example, we recall that the standard quadratic Cremona transformation of the plane is a birational automorphism of $\PP^2$ defined by
$$x'=f_2f_3,\quad y'=f_1f_3,\quad z'=f_1f_2,$$
where $f_1,f_2,f_3\in\C[x,y,z]$ are linearly independent linear forms. The lines defined by these forms are called the fundamental lines and their intersection points are called the fundamental points of the transformation. Geometrically, it is composed of the blowing-up of the fundamental points and then the contraction of the strict transforms of the fundamental lines. The images of the contracted lines serve as the fundamental points of the inverse quadratic Cremona transformation. A curve of degree $m$ not containing the fundamental lines as components is transformed to a curve of degree $2m-m_1-m_2-m_3$, where $m_1,m_2,m_3$ are the multiplicities of the fundamental points in the given curve. We denote the images of points and curves under the quadratic Cremona transformation by the upper asterisk.

Now, we take the line arrangement $C_{10}$ from Proposition \ref{prop-la}, reduce it to the subarrangement $C_7$ including the lines $l_1,l_2,l_3,l_4,l_5,l_7,l_8$, and apply the quadratic Cremona transformation with the fundamental lines $l_6,l_9,l_{10}$ (correspondingly, $p,p_6,p_7$ are the fundamental points). Thus, the lines $l_1,l_3,l_4,l_7$, and $l_8$ are taken to lines $l_1^*,l_3^*,l_4^*,l_7^*$, and $l_8^*$, respectively, and the lines $l_2$ and $l_5$ are taken to conics $l_2^*$ and $l_5^*$, respectively. This conic-line arrangement has degree $9$ and the following points of multiplicity $\ge3$:
\begin{itemize}\item the ordinary triple points $p_2^*$, $p_3^*$, $p_4^*$, $p_5^*$, and $p_8^*$ coming from the triple points of $C_7$ outside $l_6\cup l_9\cup l_{10}$;
\item the points $l_9^*$ (intersection of $l_2^*,l_3^*,l_5^*,l_7^*$) and $l_{10}^*$ (intersection of $l_2^*,l_4^*,l_5^*,l_8^*$) of multiplicity $4$ and the point $l_6^*$, where the conics $l_2^*$ and $l_5^*$ intersect transversally, and the line $l_1^*$ is tangent to $l_2^*$ (type $D_6$ in the classification of plane curve singularities);
    \item three transverse double intersection points.
\end{itemize}

\begin{proposition}\label{prop-cla1}
The pair $|D_9|,|D_9^c|$ is an anti-Zariski pair.
\end{proposition}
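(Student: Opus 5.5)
Two things have to be shown: $(\PP^2,|D_9|)$ and $(\PP^2,|D_9^c|)$ are homeomorphic, and $|D_9|$, $|D_9^c|$ lie in different components of $ES(D_9)$.

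The homeomorphism is free. Complex conjugation $c$ takes $|D_9|$ to $|D_9^c|$. In particular the two curves are equisingular, via the bijections induced by $c$ on singular points and on components.

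For the second part, my plan is to reduce to Proposition \ref{prop-la} by undoing the Cremona transformation. Suppose, on the contrary, that an equisingular deformation $\{D_t\}_{t\in[0,1]}$ joins $|D_9|$ to $|D_9^c|$.

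\emph{Step 1: the deformation carries the fundamental points.} I would first show that the three special points can be followed continuously along the family:
\begin{itemize}
\item the $D_6$ point $l_6^*$;
\item the two quadruple points $l_9^*$ and $l_{10}^*$.
\end{itemize}
The $D_6$ point is the unique singular point of its type. The two quadruple points are the only points of multiplicity $4$. Along the deformation they trace continuous paths, and they remain non-collinear, since non-collinearity is encoded in the incidence data: the line through $l_9^*$ and $l_{10}^*$ is not a component, and the conics pass through both points.

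\emph{Step 2: apply the inverse Cremona transformation fibrewise.} I take the inverse quadratic Cremona transformation centred at the three moving points. This is a continuous family of birational maps, depending on the moving fundamental points. It transforms $D_t$ into a degree-$7$ arrangement $C_7(t)$, and the degree formula gives lines for all seven components:
\begin{itemize}
\item the conics $l_2^*$, $l_5^*$ pass through all three fundamental points, so $2\cdot 2-3=1$;
\item the lines $l_3^*$, $l_7^*$ pass through $l_9^*$ only, $l_4^*$, $l_8^*$ through $l_{10}^*$ only, and $l_1^*$ through $l_6^*$ only, so $2\cdot1-1=1$ in each case.
\end{itemize}
I then adjoin the three fundamental lines of the inverse map, which are the images of the contracted lines $l_6,l_9,l_{10}$. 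This yields a family of $10$-line arrangements $C_{10}(t)$.

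I must check that this family is equisingular. The incidences of $D_t$ at non-fundamental points are preserved, because the Cremona map is biregular off the fundamental triangle. The incidences involving the fundamental lines are read off from the constant singularity data at the three special points:
\begin{itemize}
\item the four branches at each quadruple point;
\item at the $D_6$ point, the transverse conics together with the tangency of $l_1^*$, which encodes that $l_1$ passes through $p$.
\end{itemize}
This last point is the main obstacle. The tangency direction at the $D_6$ point must correspond to $l_1\ni p$, and no accidental incidence with $l_6,l_9,l_{10}$ may appear or disappear. In particular, the points $p_6$, $p_7$ and the triple points on $l_6$ must be tracked through the local branch structure at $l_9^*$, $l_{10}^*$ and $l_6^*$. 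I would verify this by explicit blow-up bookkeeping on the strict transforms.

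\emph{Step 3: conclude.} The family $C_{10}(t)$ is an equisingular deformation from $|C_{10}|$ to a curve obtained by the same construction from $D_9^c$. Since $c$ commutes with the Cremona construction, that curve is $|C_{10}^c|$. Hence $|C_{10}|$ and $|C_{10}^c|$ lie in the same component of $ES(C_{10})$, contradicting Proposition \ref{prop-la}.

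If the reduction proves delicate, I would fall back on a direct rigidity argument in the style of Propositions \ref{prop-la} and \ref{prop-cla}: show that the component of $ES(D_9)$ is a $PGL(3,\C)$-orbit, again via Cremona and the rigidity of $\mathcal L_0$, and then use the labels of components and of special points to force the connecting projective transformation to be $\Id$.
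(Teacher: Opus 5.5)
Your proposal is correct and follows the paper's own argument: the $D_6$ point and the two quadruple points are distinguished in every member of $ES(D_9)$, and the inverse Cremona transformation centred at them turns an equisingular path from $|D_9|$ to $|D_9^c|$ into one from $|C_{10}|$ to (a projective image of) $|C_{10}^c|$, contradicting the anti-Zariski property of $|C_{10}|,|C_{10}^c|$; your Step~2 bookkeeping spells out what the paper compresses into the single word ``Hence''. Two cosmetic corrections: the lines to re-adjoin are $l_6,l_9,l_{10}$, the images of the exceptional curves over the three centres (the fundamental lines of the \emph{inverse} map are the lines it contracts), and at $t=1$ you only get a curve projectively equivalent to $|C_{10}^c|$, which suffices since $PGL(3,\C)$ is connected.
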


{\bf Proof.}
If we apply to $|D_9|$ the inverse quadratic Cremona transformation with the fundamental points $l_6^*,l_9^*,l_{10}^*$, we obtain the line arrangement $|C_7|$ which gives $|C_{10}|$ after adding the original fundamental lines. Correspondingly, the image of $|D_9^c|$ under the quadratic Cremona transformation with the fundamental points $(l_6^c)^*,(l_9^c)^*,(l_{10}^c)^*$ is $|C_7^c|$ which similarly extends to $|C_{10}^c|$.
Observe that the two four-fold points and the point of type $D_6$ are uniquely determined for any conic-line arrangement equisingular to $|D_9|$. Hence, an equisingular deformation of $|D_9|$, resp. $|D_9^c|$, yields an equisingular deformation of $|C_{10}|$, resp. $|C_{10}^c|$. Thus, Proposition \ref{prop-la} completes the proof.
\proofend

It is natural to ask

\smallskip

\smallskip{\bf Question:} {\it What is the minimal degree of conic-line arrangements in anti-Zariski pairs?}
\smallskip

\medskip

A related question is

\smallskip

\smallskip{\bf Question:} {\it Are there any degree $6$ conic-line arrangements that form a reducible
equisingular family?}

\medskip
Recall that all equisingular families of curves of degree $\le5$ are irreducible \cite{Deg90}, and there are reducible families of conic-line arrangements of degree $7$ \cite{Tok} (see also \cite{AS}).

\subsection{Construction of new anti-Zariski pairs of reducible curves}\label{sec-red}

\begin{proposition}\label{prop-red}
Let $C,C^c$ be an anti-Zariski pair of complex conjugate curves, $D$ an irreducible curve that cannot be joined by an equisingular deformation with any of the components of $C^c$. Then $C\cup D$ and $(C\cup D)^c$ form an anti-Zariski pair.
\end{proposition}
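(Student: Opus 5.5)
Two things must be checked for $C\cup D$ and $(C\cup D)^c=C^c\cup D^c$: that they are homeomorphic pairs in $\PP^2$ and equisingular, and that they lie in different components of the equisingular family.

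\emph{Homeomorphism and equisingularity.} The complex conjugation $c:\PP^2\to\PP^2$ is a homeomorphism taking $C\cup D$ onto $C^c\cup D^c$. It maps components to components, preserves degrees, and maps each germ of a union of components at a singular point to the conjugate germ. Conjugation is a homeomorphism of small balls, so these germs are topologically equivalent. Hence Definition \ref{def-es}(2) holds with $\xi=c|_{\Sing}$ and $\eta$ induced by $c$, and the pairs $(\PP^2,C\cup D)$, $(\PP^2,(C\cup D)^c)$ are homeomorphic. We implicitly assume, as intended, that $D\not\subset C$, so that $C\cup D$ is reduced; this follows from the hypothesis once we know $D$ is not a component of $C$.

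\emph{Different components.} Suppose, to the contrary, that there is an equisingular deformation $\{E_t\}_{t\in[0,1]}$ with $E_0=C\cup D$ and $E_1=C^c\cup D^c$. Along such a family the components deform continuously: the bijection $\eta$ of Definition \ref{def-es}(2) is locally constant in $t$, since equisingular families are locally closed and the splitting into components with fixed degrees and incidences persists. So each component of $E_0$ is carried to a well-defined component of $E_1$, by an equisingular deformation of that single component.

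\emph{Main point: $D$ must go to $D^c$.} The component $D$ of $E_0$ is carried to some component of $E_1$, which is either $D^c$ or a component of $C^c$. The latter is excluded by hypothesis, since $D$ cannot be joined equisingularly to any component of $C^c$. Therefore $D\mapsto D^c$, and the remaining components, namely those of $C$, are carried to the components of $C^c$. Removing the tracked component $D_t$ from $E_t$ leaves a family $C_t=\overline{E_t\setminus D_t}$ from $C$ to $C^c$.

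This family is equisingular: the incidence and topological-type conditions in Definition \ref{def-es}(2) for subsets of components not involving $D_t$ are inherited from those of $E_t$. That contradicts $C,C^c$ being an anti-Zariski pair. The obstacle is justifying that components can be tracked continuously along an equisingular family and that deleting one component keeps it equisingular. Both follow directly from the definition of equisingularity for reducible curves, which prescribes the germ type of every partial union of components.
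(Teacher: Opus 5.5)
Your proof is correct and is the paper's argument: the paper simply says that an equisingular deformation of $C\cup D$ into $(C\cup D)^c$ would descend to one of $C$ into $C^c$. You spell out what the paper leaves implicit, namely that conjugation gives the homeomorphism and equisingularity, that components can be tracked along the deformation, and that the hypothesis forces $D\mapsto D^c$. Your side remark about $D\not\subset C$ is unnecessary: if $D$ were a component of $C$, then $C\cup D=C$ and the claim is just the hypothesis.
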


{\bf Proof.}
Indeed, under the hypotheses of the proposition, an equisingular deformation of $C\cup D$ into $(C\cup D)^c$ would descend to an equisingular deformation of $C$ into $C^c$.
\proofend

\begin{remark}
The hypotheses of Proposition \ref{prop-red} are fulfilled, for example, if $\deg D$ differs from the degree of any components of $C$, or if $D$ contains a singular point topologically different from any singular point of any component of $C$.
\end{remark}

\subsection{Anti-Zariski pairs of curves over a non-Archimedean field}\label{sec-na}

Denote by $\K$ the field of locally convergent or formal complex Puiseux series, or the fields of power series
$$a(t)=\sum_{r\in A}a_rt^r,\quad a_r\in\C,\ r\in A,$$
where $A$ ranges over all well-ordered subsets of $\R$. In each case, $\K$ is algebraically closed and possesses a real non-Archimedean valuation
$$\val\left(\sum_ra_rt^r\right)=-\min\{r\ : \ a_r\ne0\},$$
and the norm
$$\|a(t)\|=\exp(\val(a(t))).$$
It is well-known that over $\C$, the topological type of an isolated plane curve singular point $(C,p)$ is completely characterized by the following discrete invariant: the list of irreducible components of $(C,p)$, the list of pairwise intersection multiplicities of these components, and the list of characteristic Puiseux exponents of the components. Thus, we use this invariant to define the topological equivalence of singular points and the equisingularity relation over $\K$.

\begin{proposition}\label{prop-na}
Let curves $C_1,C_2\subset\PP^2$ belong to different components of the same equisingular family, and let $C_1,C_2$ be defined over isomorphic number fields so that they are interchanged by the field isomorphism. Denote by $\widehat C_1,\widehat C_2$ the curves in $\PP^2_\K$ defined by the same equations. Then $\widehat C_1,\widehat C_2$ form an anti-Zariski pair over $\K$.
\end{proposition}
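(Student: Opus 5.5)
The plan has two parts, matching the two requirements of Definition \ref{def-azp} read over $\K$: first, that $\widehat C_1,\widehat C_2$ lie in different components of their common equisingular family over $\K$; second, that $(\PP^2_\K,\widehat C_1)$ and $(\PP^2_\K,\widehat C_2)$ are homeomorphic in the topology defined by the norm $\|\cdot\|$.

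\emph{Components.} Let $\Sigma\subset|{\mathcal O}_{\PP^2}(d)|$ be the equisingular stratum of $C_1$ over $\C$. The defining invariant is the list of branches, their pairwise intersection multiplicities and their characteristic Puiseux exponents. This invariant is computed by an algorithm (resolution via blow-ups) that uses only the coefficients of the equation. Hence $\Sigma$ is a locally closed subset defined over $\overline{\Q}$: it is cut out by conditions on the coefficients. Moreover, for any algebraically closed extension $\C\subset\K$ of characteristic zero, the $\K$-points of $\Sigma_\K=\Sigma\times_\C\K$ are exactly the curves over $\K$ with the same invariant. So the equisingular family of $\widehat C_1$ over $\K$ is $\Sigma_\K$.

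Since $\C$ is algebraically closed, each irreducible component $V$ of $\Sigma$ is geometrically irreducible. Therefore $V_\K$ is irreducible, and $V\mapsto V_\K$ is a bijection between the components of $\Sigma$ and those of $\Sigma_\K$. A $\C$-point lies in $V_\K$ if and only if it lies in $V$. By hypothesis no component of $\Sigma$ contains both $C_1$ and $C_2$, so no component of $\Sigma_\K$ contains both $\widehat C_1$ and $\widehat C_2$.

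\emph{Homeomorphism.} Let $\sigma$ be the field isomorphism interchanging the equations of $C_1$ and $C_2$, and extend it to an automorphism $\tau$ of $\C$. Define $\widehat\tau:\K\to\K$ by
$$\widehat\tau\Big(\sum_r a_rt^r\Big)=\sum_r\tau(a_r)t^r.$$
This is a field automorphism, because the multiplication of series is given by finite sums of products of coefficients. It preserves the support of each series, hence preserves $\val$ and the norm, so it is an isometry of $\K$. Applied coordinatewise, $\widehat\tau$ gives a homeomorphism of $\PP^2_\K$ in the norm topology. Since the equations have coefficients in the number field, $\widehat\tau$ maps $\widehat C_1$ onto $\widehat C_2$.

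\emph{Main obstacle.} The weak point is the field of \emph{locally convergent} Puiseux series. There $\widehat\tau$ is well defined only when $\tau$ preserves convergence, which a wild automorphism of $\C$ does not. When $\sigma$ is complex conjugation, one takes $\tau=c$ and there is no problem. In general, I would first try to avoid $\tau$ altogether. The idea is to use that the norm topology sees only supports and valuations, and to construct a valuation-preserving bijection directly. Failing that, I would restrict the statement in the convergent case to pairs interchanged by complex conjugation, which covers all the examples in the paper. For the formal Puiseux series and Hahn-type fields $\K$, the argument above works as written.
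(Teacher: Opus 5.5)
Your argument follows the paper's proof. The paper also obtains the statement about components from the fact that equisingular families are defined over $\overline{\Q}$ plus the Lefschetz principle; your base change via geometric irreducibility of the components of $\Sigma$ spells this out. It also obtains the homeomorphism by extending $\sigma$ to $\tau\in\mathrm{Aut}(\C)$ via Steinitz and letting $\tau$ act on coefficients. For formal Puiseux series and the Hahn-type fields this is a complete proof.

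The obstacle you raise for locally convergent Puiseux series is genuine. The paper's one-line proof passes over it: it simply asserts that $\tau$ induces an isometry of $\PP^2_\K$. Your proposal does not close this case either, so for that choice of $\K$ a gap remains. Two of your remarks about it need correcting.

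\emph{The fallback does not cover the paper's examples.} Restricting to pairs interchanged by complex conjugation does \emph{not} cover the tuples listed right after the proposition. Those tuples are permuted by the isomorphisms $\Q(\xi_i)\to\Q(\xi_j)$ between root fields of $P$. For $A_{18}\oplus A_1$, the cubic $576x^3+1520x^2+700x+125$ has negative discriminant, so one root is real and two are not. The isomorphism from the real field to a non-real one is not the restriction of $\Id$ or $c$.

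\emph{No other field automorphism can replace $\widehat\tau$.} In that situation there is no field automorphism $\Phi$ of $\K$ extending $\sigma$ that is a homeomorphism for the norm topology. Suppose there were one.
\begin{enumerate}
\item[(i)] $\Phi$ would preserve the valuation ring and its maximal ideal.
\item[(ii)] $\Phi$ would map $\overline{\Q}$ to itself, since $\overline{\Q}$ is the algebraic closure of $\Q$ in $\K$.
\item[(iii)] $\rho=\Phi|_{\overline{\Q}}$ would not be the restriction of $\Id$ or $c$. So it is discontinuous for the complex topology, since a continuous $\rho$ would extend to $\Id$ or $c$ on $\C$. Being additive, it is then unbounded near $0$.
\item[(iv)] Choose $a_n\in\overline{\Q}$ with $|a_n|\le 1$ and $|\rho(a_n)|\ge n^n$. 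Put $g=\sum a_nt^n\in\K$ and $u=\Phi(t)$, so $\|u\|<1$.
\item[(v)] Then $\|\Phi(g)-\sum_{n<N}\rho(a_n)u^n\|\le\|u\|^N$ for all $N$. Hence, in the formal Puiseux field, $\Phi(g)=\sum_n\rho(a_n)u^n$.
\item[(vi)] Now $u=\psi^p$ for a convergent series $\psi$ in $t^{1/M}$ with nonzero linear term. Also $\sum\rho(a_n)s^n$ has radius of convergence $0$. Composing with $\psi^{-1}$ shows that $\sum_n\rho(a_n)u^n$ diverges, contradicting $\Phi(g)\in\K$.
\end{enumerate}

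So in the convergent case with such $\sigma$, any homeomorphism of pairs would have to be non-algebraic, and neither your proposal nor the paper provides one. What is actually proved is the statement for the formal and Hahn-type fields. For locally convergent Puiseux series it is proved only when $\sigma$ is induced by $\Id$ or complex conjugation.
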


{\bf Proof.}
Equisingular families are defined over the field of algebraic numbers $\overline\Q$; hence, by Lefschetz principle, $\widehat C_1$ and $\widehat C_2$ belong to different components of the equisingular family over $\K$. Furthermore, by Steinitz theorem, an isomorphism of number fields extends to an automorphism of $\C$, which induces an isometry $\PP^2_\K\to\PP^2_\K$ taking $\widehat C_1$ to $\widehat C_2$.
\proofend

As examples, we present here anti-Zariski tuples of equisingular maximizing sextics over $\K$. In each tuple, the curves are defined over isomorphic number fields, and the curves are interchanged by these field isomorphisms. We cite the results from \cite{Ore}: for each collection of singularities listed below, there is an irreducible 
polynomial $P(x)\in\Q[x]$ of some degree $n$ and an $n$-tuple of maximizing sextic curves defined over the number fields $\Q(\xi)$ with $\xi$ ranging over the roots of $P(x)$ so that the curves are interchanged by the corresponding field isomorphisms:
\begin{itemize}\item an anti-Zariski triple over $\K$
$$A_{18}\oplus A_1,\quad P(x)=576x^3 + 1520x^2 + 700x + 125;$$
\item an anti-Zariski triple over $\K$
$$A_{16}\oplus A_2\oplus A_1,\quad P(x)=311x^3 -293x^2 + 85x - 7;$$
\item an anti-Zariski $6$-tuple over $\K$
$$A_{14}\oplus A_4\oplus A_1,\quad P(x)=9x^6 - 27x^5 - 45x^4 + 195x^3 - 20x^2 - 372x + 276;$$
\item an anti-Zariski $4$-tuple over $\K$
$$A_{13}\oplus A_6,\quad P(x)=9x^4 - 63x^3 + 175x^2 - 224x + 112;$$
\item an anti-Zariski triple over $\K$
$$A_{12}\oplus A_6\oplus A_1,\quad P(x)=441x^3 + 315x^2 + 79x + 7;$$
\item an anti-Zariski triple over $\K$
$$A_{12}\oplus A_4\oplus A_2\oplus A_1,\quad P(x)=15x^3 - 48x^2 + 40x - 10;$$
\item an anti-Zariski triple over $\K$
$$A_{10}\oplus A_8\oplus A_1,\quad P(x)=x^3 + 17x^2 + 51x + 43;$$
\item an anti-Zariski triple over $\K$
$$A_{10}\oplus A_6\oplus A_2\oplus A_1,\quad P(x)=3x^3 + 9x^2 + 5x + 7;$$
\item an anti-Zariski triple over $\K$
$$A_{10}\oplus 2A_4\oplus A_1,\quad P(x)=7x^3 + 43x^2 + 77x + 49;$$
\item an anti-Zariski triple over $\K$
$$A_9\oplus A_6\oplus A_4,\quad P(x)=57x^3 - 3196x^2 + 221x - 7;$$
\item an anti-Zariski triple over $\K$
$$A_8\oplus A_6\oplus A_4\oplus A_1,\quad P(x)=5x^3 - 3495x^2 + 8047x - 10925.$$
\end{itemize}

\end{document}